\documentclass{article}
\usepackage{amsfonts}
\usepackage{amsmath}
\usepackage{amsthm}
\usepackage{amssymb}
\usepackage[colorlinks=true]{hyperref}
\usepackage{url}

\def\@begintheorem#1#2{\par\bgroup{\sc #1\ #2. }\it\ignorespaces}
\def\@opargbegintheorem#1#2#3{\par\bgroup{\sc #1\ #2\ (#3). } \it\ignorespaces}
\def\@endtheorem{\egroup}
\newtheorem{theorem}{Theorem}[section]

\newtheorem{lemma}[theorem]{Lemma}

\title{An improved upper bound on\\the diameters of subset partition graphs}
\author{J. Mackenzie Gallagher and Edward D. Kim\\{\sl University of Wisconsin-La Crosse}}
\date{}

\begin{document}

\maketitle

\begin{abstract}
In 1992, Kalai and Kleitman proved the first subexponential upper bound for the diameters of convex polyhedra. Eisenbrand et al. proved this bound holds for connected layer families, a novel approach to analyzing polytope diameters. Very recently, Todd improved the Kalai-Kleitman bound for polyhedra to $(n-d)^{1+\log_2d}$. In this note, we prove an analogous upper bound on the diameters of subset partition graphs satisfying a property related to the connectivity property of connected layer families.
\end{abstract}

\section{Introduction}

Dongarra and Sullivan cited the simplex algorithm as one of the most important algorithms of the twentieth century (see~\cite{Dongarra:Top-Ten-Algorithms-of-the-Century}). Diameters of polyhedral graphs are relevant in understanding the efficiency of the simplex algorithm for linear programming. Given any linear program and any pivot rule, the number of pivots required by the simplex algorithm using the pivot rule is at least the diameter of graph of the feasibility polyhedron. Motivated by the efficiency of linear programming, Hirsch (see~\cite{Dantzig:Linear}) asked if $n-d$ pivots would suffice to solve any linear program in $d$ variables with $n$ inequalities. This question turned out to be false if the feasibility polyhedron was unbounded (see~\cite{Klee:d-step} by Klee and Walkup), or if the feasibility polyhedron was a bounded polytope but whose simplex path was monotone with respect to the objective function (see~\cite{Todd:The-monotonic-bounded} by Todd). Since these results, the Hirsch Conjecture became the assertion that every $d$-dimensional bounded polytope with $n$ facets has combinatorial diameter at most $n-d$. This statement of the Hirsch Conjecture was disproved by Santos in~\cite{Santos:CounterexampleHirsch} and subsequently improved in~\cite{MatschkeSantosWeibel:Width5Prismatoids} by Matschke, Santos, and Weibel.

Abstractions of polytopes have been studied since the 1970s as an avenue to understanding the diameters of polytopes and polyhedra. (See,~e.g.,~\cite{Adler:AbstractPolytopesThesis}, \cite{Adler:LowerBounds}, \cite{AdlerDantzigMurty:AbstractPolytopes}, \cite{Adler:MaxDiamAbsPoly}, \cite{AdlerSaigal:LongPathsAbstract}, \cite{Kalai:DiameterHeight}, and~\cite{Murty:GraphAbstract}.) More recent studies of abstractions have been studied by Eisenbrand, H\"ahnle, Razborov, and Rothvo\ss{} (see~\cite{Eisenbrand:Diameter-of-Polyhedra}, \cite{Hahnle:Diplomathesis}, and~\cite{Hahnle:SPGs}) and by the second author in~\cite{Bogart:SuperlinearSPG} and~\cite{Kim:PolyhedralGraphAbstractions}. This method of analyzing the diameter of polytopes has gained interest in light of the counterexamples to the Hirsch Conjecture. Though both unbounded polyhedra and bounded polytopes do not satisfy the Hirsch Conjecture in general, no superlinear lower bound is known for their diameters. The non-existence of superlinear diameters is codified in what is known as the Linear Hirsch Conjecture. Recent results by Eisenbrand et al.{}~(see~\cite{Eisenbrand:Diameter-of-Polyhedra}, \cite{Hahnle:SPGs}) prove that statements analogous to the Linear Hirsch Conjecture are false for certain abstractions of polytopes. Recently, Bogart and Kim (see~\cite{Bogart:SuperlinearSPG}) showed that subset partition graphs (a certain abstraction of polyhedra) satisfying all previously studied combinatorial properties for abstract polytopes have superlinear diameter.

For upper bounds, the most general upper bound which applies to convex polytopes and polyhedra is a supexponential bound by Kalai and Kleitman in~\cite{Kalai:Quasi-polynomial} which states that the diameter of any $d$-dimensional polyhedron with $n$ facets is at most $n^{1+\log_2d}$. This upper bound was recently improved to $(n-d)^{\log_2d}$ by Todd in~\cite{Todd:KalaiKleitman}. A result in~\cite{Kim:PolyhedralGraphAbstractions} based on earlier results from Eisenbrand et al.{} in~\cite{Eisenbrand:Diameter-of-Polyhedra} showed that an upper bound of $n^{1+\log_2d}$ applies to the diameters of subset partition graphs satisfying a connectivity property. In this note, we show that an improved upper bound of $(n-d)^{1+\log_2d}$ applies to the diameters of these subset partition graphs. (See Theorem~\ref{theorem:Todd-diameter-upper-bound-SPG}.) For more about the Hirsch Conjecture, see~\cite{Klee:d-step} or the recent survey~\cite{KimSantos:HirschSurvey}. Our terminology on polytopes follows~\cite{Ziegler:Lectures}.

\section{Preliminaries and notation}

Let $S=\{1,2,\dots,n\}$ be a finite set called the \emph{symbol set} (each $s \in S$ is called a \emph{symbol}). We use the notation $\binom{S}{d}$ to denote the set of all $d$-element subsets (or \emph{$d$-sets}) of $S$. Let $G=(\mathcal{V},E)$ be a connected graph with vertex set $\mathcal{V}$ and edge set $E$. For a set $\mathcal{A}\subseteq \binom{S}{d}$ and a vertex set $\mathcal{V}=\{\mathcal{V}_1,\mathcal{V}_2,\dots,\mathcal{V}_k\}$, if $\mathcal{V}$ partitions $\mathcal{A}$ in the sense that	
	\begin{enumerate}
	\item $\mathcal{V}_i \cap \mathcal{V}_j = \emptyset$ for all $i\neq j$
	\item $\mathcal{A}=\mathcal{V}_1 \cup \mathcal{V}_2 \cup \cdots \cup \mathcal{V}_k$,
	\item $\mathcal{V}_i\neq \emptyset$ for all $i$,
	\end{enumerate}
we say $G$ is a \emph{$d$-dimensional subset partition graph on the symbol set $S$}.

Let $G=(\mathcal{V},E)$ be a $d$-dimensional subset partition graph of $\mathcal{A}$ on the symbol set $S$ and let $F\subseteq S$. We define a new subset partition graph $G_F=(\mathcal{V}_F,E_F)$ of $\mathcal{A}_F$ on the symbol set $S$ by applying the following operations on $G$:
	\begin{enumerate}
	\item Remove any $d$-sets in $\mathcal{A}$ that do not contain $F$, resulting in $\mathcal{A}_F = \{A \in \mathcal{A} \mid F \subseteq A\}$.
	\item For each $i$, if $\mathcal{V}_i \cap \mathcal{A}_F = \emptyset$, remove $\mathcal{V}_i$ and any incident edges.
	\end{enumerate}
The resulting graph $G_F$ is called the \emph{restriction} of $G$ with respect to $F$. This process can be thought of as ``inducing" on $F$ to obtain the subgraph $G_F$.
	
Historically, the following properties of subset partition graphs have been considered:
\begin{itemize}
\item {\bf dimension reduction:} if $F \subseteq S$ with $\vert F\vert\leq d$, the restriction graph $G_F$ is connected.
\item {\bf adjacency:} for elements $A,A'$ of $\mathcal{A}$, if $\vert A \cap A'\vert=d-1$, then $A$ and $A'$ are  in the same or adjacent vertices of $G$.
\item {\bf strong adjacency:} adjacency is satisfied and, for any two vertices $\mathcal{V},\mathcal{V'}\in\mathcal{A}$ there exist $d$-sets $A\in \mathcal{V}$ and $A' \in \mathcal{V'}$ with $\vert A\cap A'\vert=d-1$.
\item {\bf endpoint-count:} if $F\in \binom{S}{d-1}$, then the number of vertices in $G$ containing $F$ is no greater than 2.
\end{itemize}
In~\cite{Eisenbrand:Diameter-of-Polyhedra}, Eisenbrand et al. showed that subset partition graphs with dimension reduction whose underlying graphs $G = P_k$ are paths had diameter in $\Omega(\frac{n^2}{\log n})$. In~\cite{Kim:PolyhedralGraphAbstractions}, Kim showed a similar asymptotic lower bound of $\Omega(n^2)$ holds for subset partition graphs satisfying adjacency and endpoint-count, which was subsequently improved by H\"ahnle (see~\cite{Hahnle:SPGs}) to an exponential lower bound for subset partition graphs satisfying all the above properties except dimension reduction. Very recently, Bogart and Kim (see~\cite{Bogart:SuperlinearSPG}) showed that when all four properties are considered, the Eisenbrand asymptotic lower bound of $\Omega(\frac{n^2}{\log n})$ holds.

This drastic difference (exponential versus almost-quadratic) in known diameter lower bounds shows that, given our current understanding, the connectivity property of dimension reduction is the key property in understanding the behavior of polytope diameters. In~\cite{Kalai:Quasi-polynomial}, Kalai and Kleitman proved the diameter of polyhedra is at most $n^{1+\log_2d}$. Eisenbrand et al. (see~\cite{Eisenbrand:Diameter-of-Polyhedra}) showed that this upper bound for polyhedra also holds for subset partition graphs satisfying dimension reduction. In~\cite{Todd:KalaiKleitman}, Todd improved the Kalai-Kleitman bound to $(n-d)^{\log_2d}$. We show that an analogous upper bound of $(n-d)^{1+\log_2d}$ holds for subset partition graphs satisfying dimension reduction in Theorem~\ref{theorem:Todd-diameter-upper-bound-SPG} below.

\section{Maximal diameter of subset partition graphs}

Let $\Sigma_{\text{DR}}(d,n)$ denote the maximum diameter among $d$-dimensional subset partition graphs on $n$ symbols satisfying dimension reduction. In this section, we prove a diameter upper bound result for subset partition graphs which is analogous to Todd's improvement in~\cite{Todd:KalaiKleitman} on the Kalai and Kleitman bound in~\cite{Kalai:Quasi-polynomial}. As in~\cite{Kalai:Quasi-polynomial} and~\cite{Todd:KalaiKleitman}, all logarithms are base $2$. We make use of the fact that if $y,z > 0$, then $y^{\log z} = z^{\log y}$.

\begin{lemma}\label{lemma:inequalities}
Let $c = \log 3$.
If $d \geq 3$ is an integer, then $(\frac{d-1}{d})^{c}  + \frac1d  + \frac{2}{d \cdot d^{\log d}} \leq 1$.
\end{lemma}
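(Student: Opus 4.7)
The plan is to rewrite the inequality so the difficulty concentrates in a single power of $(1-1/d)$ and then to apply a concavity bound. Moving $1/d$ to the right side and dividing through by the positive quantity $(d-1)/d$, the statement is equivalent to
\[
\left(\frac{d-1}{d}\right)^{c-1} \;\leq\; 1 - \frac{2}{(d-1)\, d^{\log d}}.
\]

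The key observation is that $c - 1 = \log 3 - 1 = \log(3/2)$ lies in the open interval $(0, 1)$. Consequently $\phi(x) = (1-x)^{c-1}$ has second derivative $(c-1)(c-2)(1-x)^{c-3} < 0$ on $[0, 1)$ and is therefore concave, so it lies below its tangent at the origin, yielding $\phi(x) \leq 1 - (c-1)\,x$. Specializing at $x = 1/d$ reduces the problem to the scalar inequality
\[
(\log 3 - 1)(d-1)\, d^{\log d - 1} \;\geq\; 2.
\]

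For $d \geq 4$ this is essentially free: $\log d \geq 2$ gives $d^{\log d - 1} \geq d \geq 4$ and $d - 1 \geq 3$, so the left side is at least $12\log(3/2)$, which is comfortably larger than $2$. The case $d = 3$ is where I expect the real work to be, since the tangent-line bound is tight only at $x = 0$ and is weakest at $d = 3$; I would handle it by a separate direct reduction. Using the identity $2^{\log 3} = 3$ (a special case of $y^{\log z} = z^{\log y}$), the original inequality at $d = 3$ collapses, after clearing denominators, to the single algebraic condition $3^{\log 3} \geq 11/2$. This in turn follows from the rational bound $\log_2 3 > 19/12$ (obtained from $3^{12} = 531441 > 524288 = 2^{19}$), since then $3^{\log 3} = 2^{(\log 3)^2} > 2^{(19/12)^2} \geq 2^{5/2} = 4\sqrt{2} > 11/2$. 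Thus the whole argument rests on one concavity bound together with one rational approximation to $\log_2 3$.
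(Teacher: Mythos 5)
Your proof is correct, but it takes a genuinely different route from the paper. Every step checks out: the rearrangement to $(\frac{d-1}{d})^{c-1}\leq 1-\frac{2}{(d-1)d^{\log d}}$ is a valid equivalence, the concavity (Bernoulli-type) bound $(1-x)^{c-1}\leq 1-(c-1)x$ is legitimate since $c-1=\log\frac32\in(0,1)$, the resulting scalar inequality $(\log 3-1)(d-1)d^{\log d-1}\geq 2$ holds easily for $d\geq 4$, and your $d=3$ reduction is exact: setting $t=3^{\log 3}$ and using $2^{\log 3}=3$ turns the inequality into $\frac3t+\frac13+\frac2{3t}\leq 1$, i.e.\ $t\geq \frac{11}{2}$, which your bound $\log 3>\frac{19}{12}$ (from $3^{12}>2^{19}$) delivers via $t>2^{361/144}>2^{5/2}=4\sqrt2>\frac{11}{2}$ (i.e.\ $128>121$). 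The paper instead defines $f(d)$ to be the left-hand side, evaluates $f(3)\approx .976$ and $f(4)\approx .915$ numerically, and then asserts that $f$ is increasing on $[5,\infty)$ with $f(5)<1$ and $f(d)\to 1$, concluding $f(d)\leq 1$ for $d\geq 5$; that argument is shorter to state but rests on decimal approximations and a monotonicity claim that is not proved. Your approach buys a fully elementary, hand-checkable certificate (only the integer comparisons $3^{12}>2^{19}$, $(19/12)^2>5/2$, and $128>121$), at the cost of the initial algebraic massaging. Two small polish points: to make ``comfortably larger than $2$'' airtight for $d\geq4$, note $\log\frac32>\frac12$ since $(\frac32)^2>2$, so $12\log\frac32>6$; and in fact your scalar inequality also holds at $d=3$ (it evaluates to about $2.22$), though your exact treatment of $d=3$ is the cleaner thing to verify rigorously.
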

\begin{proof}
Let $f(d)=(\frac{d-1}{d})^{c}  + \frac1d  + \frac{2}{d \cdot d^{\log d}}$. Indeed, when $d=3$ and $d=4$, we have $f(3)\approx .976$ and $f(4)\approx .915$. The function $f$ is increasing on the interval $[5,\infty)$ and $f(5)<1$. Since $f(d)\to1$ as $d\to \infty$, it follows that $f(d)\leq 1$ for all $d\geq 5$.
\end{proof}

\begin{lemma}\label{lemma:d2quadratic}
For $n \geq 2$ integer, one has $\Sigma_{\text{DR}}(2,n) \leq (n-2)^2$.
\end{lemma}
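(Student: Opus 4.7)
The plan is to bound the diameter of a 2-dimensional subset partition graph with dimension reduction by combining a ``span'' argument (dimension reduction limits how far apart in path distance the same symbol can reappear along a shortest path) with a counting argument on the tail of that path after the starting vertex has ``used up'' its symbols.

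The key lemma is that for any symbol $s \in S$, the restriction $G_{\{s\}}$ has diameter at most $n-2$. Connectivity of $G_{\{s\}}$ is dimension reduction applied with $F = \{s\}$. The vertex count of $G_{\{s\}}$ is at most $n-1$: every vertex of $G_{\{s\}}$ contains at least one 2-set of the form $\{s, t\}$, these 2-sets are disjoint across distinct vertices by the partition property, and there are at most $n-1$ such 2-sets in $\binom{S}{2}$. As a consequence, along any shortest path $u_0, u_1, \dots, u_D$ in $G$, if a symbol $s$ appears in a 2-set of both $u_i$ and $u_j$ with $i < j$, then $j - i = d_G(u_i, u_j) \leq n-2$.

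Next, let $S_0$ denote the set of symbols appearing in any 2-set of $u_0$; since $u_0$ is nonempty, $|S_0| \geq 2$. The span bound implies that no symbol of $S_0$ appears in any $u_i$ with $i \geq n-1$. Hence for every such $i$ the 2-sets of $u_i$ lie inside $\binom{S \setminus S_0}{2}$, a family of size at most $\binom{n-2}{2}$. The $D - n + 2$ vertices $u_{n-1}, u_n, \dots, u_D$ are pairwise distinct and, by the partition property, contribute pairwise disjoint nonempty collections of 2-sets from this pool, so $D - n + 2 \leq \binom{n-2}{2}$ and therefore $D \leq \binom{n-2}{2} + (n-2) = (n-1)(n-2)/2$. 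A direct arithmetic check shows $(n-1)(n-2)/2 \leq (n-2)^2$ for every $n \geq 3$, and the case $n = 2$ gives $D = 0 = (n-2)^2$ trivially.

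I expect the main step to be the span bound itself: recognizing that dimension reduction, even in this weakest form, pins down exactly how far apart along a shortest path a single symbol can be used is what drives everything. Once this observation is in place, the counting of 2-sets available after position $n-1$ is elementary, and the arithmetic comparison with $(n-2)^2$ requires no further structural input.
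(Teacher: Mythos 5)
Your proof is correct, but it takes a genuinely different route from the paper. The paper does not use dimension reduction at all here: it simply notes that a $2$-dimensional subset partition graph on $n$ symbols has at most $\binom{n}{2}$ vertices, hence diameter at most $\binom{n}{2}-1$, which is at most $(n-2)^2$ only for $n=2$ and $n\geq 5$; the remaining cases $n=3,4$ are settled by direct inspection. You instead exploit the dimension reduction hypothesis substantively: restricting to a singleton $F=\{s\}$ gives a connected subgraph with at most $n-1$ vertices, hence a ``span'' bound of $n-2$ on how far apart along a shortest path a symbol can recur, and then counting the $2$-sets available beyond position $n-1$ yields the sharper, uniform bound $D\leq \binom{n-2}{2}+(n-2)=\binom{n-1}{2}\leq (n-2)^2$ for $n\geq 3$, with no small-case inspection needed. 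What your approach buys is a stronger intermediate estimate and a proof that works uniformly in $n$; what the paper's approach buys is brevity and the fact that it needs nothing beyond the partition structure. Two cosmetic points: your counting step tacitly assumes $D\geq n-1$ (if $D\leq n-2$ the conclusion is immediate since $n-2\leq (n-2)^2$ for $n\geq 3$, and the inequality $D-n+2\leq\binom{n-2}{2}$ holds vacuously anyway), and ``these 2-sets are disjoint across distinct vertices'' should read that they are \emph{distinct} across distinct vertices (they all contain $s$); neither affects correctness.
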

\begin{proof}
For $n=3$ and $n=4$, the inequality $\Sigma_{\text{DR}}(2,n) \leq (n-2)^2$ holds by quick inspection. There are at most $\binom{n}{2}-1$ edges to traverse between any two vertices of a $2$-dimensional subset partition graph with $n$ symbols. The results for $n=2$ and $n \geq 5$ follows by comparing the graphs of $y=(x-2)^2$ and $y=\frac{x(x-1)}{2}-1$.
\end{proof}

\begin{theorem}\label{theorem:Todd-diameter-upper-bound-SPG}
For $n \geq d \geq 1$, one has $\Sigma_{\text{DR}}(d,n) \leq (n-d)^{1+\log d}$.
\end{theorem}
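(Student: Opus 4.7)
The plan is to mimic Todd's refinement of the Kalai--Kleitman ball argument \cite{Todd:KalaiKleitman}, as applied to SPGs with dimension reduction in \cite{Eisenbrand:Diameter-of-Polyhedra}, but shifting the cutoff from $n/2$ to $d + \lceil(n-d)/2\rceil$ so that only the ``excess'' $n-d$ symbols beyond those already covered by any $d$-set are halved. I proceed by strong induction on $d$, with $d=1$ immediate and $d=2$ supplied by Lemma~\ref{lemma:d2quadratic}. For $d \geq 3$, I also induct on $n$, starting from the trivial case $n = d$ (a single vertex) and the case $n = d+1$, where dimension reduction applied to $(d-1)$-subsets of $S$ forces any two $d$-sets to lie in the same or adjacent vertices, yielding $\Sigma_{\mathrm{DR}}(d, d+1) \leq 1$.

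For the inductive step, fix an SPG $G$ with dimension reduction on $n$ symbols and $u, v \in V(G)$ realizing $\delta = \mathrm{diam}(G)$. For each $r \geq 0$ let $U_r$ denote the set of symbols appearing in some $d$-set of some vertex within distance $r$ of $u$, and define $V_r$ analogously from $v$. Choose the smallest $r^*, s^*$ with $|U_{r^*}|, |V_{s^*}| \geq d + \lceil(n-d)/2\rceil$. Then $|U_{r^*}| + |V_{s^*}| \geq n + d > n$, so some symbol $t$ lies in both, and dimension reduction applied to $\{t\}$ (of size $1 \leq d$) makes $G_{\{t\}}$ connected. Viewing $G_{\{t\}}$ as a $(d-1)$-dimensional SPG on $n - 1$ symbols that inherits dimension reduction, the induction on $d$ yields $\mathrm{diam}(G_{\{t\}}) \leq (n-d)^{1+\log(d-1)}$; picking $u' \in B_{r^*}(u) \cap V(G_{\{t\}})$ and $v' \in B_{s^*}(v) \cap V(G_{\{t\}})$ produces $\delta \leq r^* + s^* + (n-d)^{1+\log(d-1)}$.

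The main obstacle is bounding $r^*$ (and, by symmetry, $s^*$). Every vertex of $B_{r^*-1}(u)$ is supported on the symbol set $U_{r^*-1}$ of size at most $d + \lfloor(n-d)/2\rfloor$, and the goal is an auxiliary lemma exhibiting this ball, equipped with its induced partition and restricted alphabet, as an SPG with dimension reduction on at most $d + \lfloor(n-d)/2\rfloor$ symbols; the induction on $n$ then gives $r^* \leq \Sigma_{\mathrm{DR}}(d, d+\lfloor(n-d)/2\rfloor) + 1 \leq ((n-d)/2)^{1+\log d} + 1$. The delicate part of that lemma is checking dimension reduction inside the ball: for admissible $F \subseteq U_{r^*-1}$, a path in $G_F$ between two ball vertices may exit the ball, and one must reroute it using the fact that $U_{r^*-1}$ is exactly the symbol set swept out by successive BFS layers from $u$. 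Granting that, assembling gives $\delta \leq 2 ((n-d)/2)^{1+\log d} + 2 + (n-d)^{1+\log(d-1)}$. Dividing by $(n-d)^{1+\log d}$ and applying the identity $y^{\log z} = z^{\log y}$ to rewrite $(n-d)^{\log(d-1)-\log d} = ((d-1)/d)^{\log(n-d)}$ reduces the target to
\[
\tfrac{1}{d} + \Bigl(\tfrac{d-1}{d}\Bigr)^{\log(n-d)} + \tfrac{2}{(n-d)^{1+\log d}} \;\leq\; 1,
\]
which for $n - d \geq 3$ is controlled term-by-term by Lemma~\ref{lemma:inequalities} with $c = \log 3$: the condition $n - d \geq 3$ gives $((d-1)/d)^{\log(n-d)} \leq ((d-1)/d)^c$, and $(n-d)^{1+\log d} \geq 3^{1+\log d}$ handles the small correction term numerically for all $d \geq 3$. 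The finitely many remaining small values of $n - d$ are dispatched by direct verification of the recurrence (e.g., $n - d = 2$ gives equality $2d = 2^{1+\log d}$).
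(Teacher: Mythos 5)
Your route is Todd's original ball-growing argument transplanted to subset partition graphs (shift the pigeonhole threshold to $d+\lceil (n-d)/2\rceil$ so only the excess symbols are halved), which is genuinely different from the paper's proof; but as written it has a real gap at exactly the point you flag and then ``grant.'' The recursion $\delta \leq r^*+s^*+(n-d)^{1+\log(d-1)}$ with $r^*,s^*\leq \Sigma_{\text{DR}}(d,d+\lfloor (n-d)/2\rfloor)+1$ needs the auxiliary lemma that the ball $B_{r^*-1}(u)$, with its induced partition on at most $d+\lfloor(n-d)/2\rfloor$ symbols, is again an SPG satisfying dimension reduction. That statement is not just delicate, it is false as stated: for a subset $F$ of the ball's symbols, $G_F$ is connected in $G$, but all of its paths between two ball vertices may pass through vertices outside the ball, and induced balls of SPGs with dimension reduction need not inherit dimension reduction, so no ``rerouting'' inside the ball is available in general. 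The known repair is a change of viewpoint rather than a rerouting: since $G_F$ is connected, the set of distances from $u$ of vertices meeting $\mathcal{A}_F$ is an interval of integers, so the BFS layering of the ball is a connected layer family, prefixes of which keep the consecutiveness property; CLFs are SPGs with dimension reduction whose graph is a path, whence $r^*-1\leq \Sigma_{\text{DR}}(d,d+\lfloor(n-d)/2\rfloor)$. Without supplying some such argument your recursion is unestablished. The paper avoids this machinery entirely: it cites the already-proved Kalai--Kleitman recursion with cutoff $\lfloor n/2\rfloor$ (Eisenbrand et al., Kim), handles $n<2d$ by ``moving to a facet'' (any two $d$-sets then share a symbol, so the diameter is at most $\Sigma_{\text{DR}}(d-1,n-1)$), and for $n\geq 2d$ uses $\lfloor n/2\rfloor-d\leq (n-d)/2$ in the Todd-style algebra.

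Two further points do not survive as written. First, the case $n-d=2$, $d\geq 3$ is not actually dispatched: your own recursion gives at most $2\bigl(\Sigma_{\text{DR}}(d,d+1)+1\bigr)+\Sigma_{\text{DR}}(d-1,d+1)\leq 4+2(d-1)=2d+2$, which exceeds the target $2^{1+\log d}=2d$; observing that the target equals $2d$ is not a verification. (The facet-moving step fixes this: for $d\geq 3$ any two $d$-sets among $d+2$ symbols intersect, so the diameter is at most $\Sigma_{\text{DR}}(d-1,d+1)\leq 2(d-1)\leq 2d$; but that step is absent from your proposal.) Second, with only $n-d\geq 3$ your correction term is bounded by $2/3^{1+\log d}=2/(3d^{\log 3})$, which is larger than the term $2/(d\cdot d^{\log d})$ appearing in Lemma~\ref{lemma:inequalities}; the lemma therefore does not apply verbatim (the paper can invoke it because $n\geq 2d$ gives $n-d\geq d$), and your appeal to a numerical check for all $d\geq 3$ is an assertion that needs its own short argument. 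Finally, the degenerate situation in which $r^*$ or $s^*$ does not exist (the whole component near $u$ or $v$ uses fewer than $d+\lceil(n-d)/2\rceil$ symbols) should be noted and handled by induction on $n$, though that part is routine.
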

\begin{proof}
By Lemma~\ref{lemma:d2quadratic}, the stated inequality holds for $d \leq 2$. In addition, when $n < 2d$, we can ``move to a facet'' in the following sense: because any two $d$-sets share a common symbol, the diameter is at most~$\Sigma_{\text{DR}}(d-1,n-1) \leq (n-d)^{1+\log (d-1)}$ by induction. So, we may assume that $d \geq 3$ and that $n \geq 2d$. This implies that $n - d \geq d$, thus $n-d \geq 3$. The last inequality implies, $\log(n-d) \geq \log_2 3 =: c \approx 1.5$.

The Kalai-Kleitman recursion of $\Delta(d,n) \leq \Delta(d-1,n-1) + 2 \cdot \Delta(d, \lfloor n/2 \rfloor) + 2$ in~\cite{Kalai:Quasi-polynomial} holds for subset partition graphs satisfying dimension reduction due to the Pigeonhole Principle (see~\cite{Eisenbrand:Diameter-of-Polyhedra} or~\cite{Kim:PolyhedralGraphAbstractions}). By induction:
\begin{align*}
\Sigma_{\text{DR}}(d,n) &\leq \Sigma_{\text{DR}}(d-1,n-1) + 2 \cdot \Sigma_{\text{DR}}(d, \lfloor n/2 \rfloor) + 2 \\
&\leq (n-d)^{1+\log(d-1)} + 2 \cdot (n/2 - d)^{1+\log d} + 2\\
&= (n-d) \cdot (n-d)^{\log(d-1)} + 2 \cdot (n/2-d) \cdot (n/2 - d)^{\log d} + 2\\
&= (n-d) \cdot (d-1)^{\log(n-d)} + 2  (n/2-d) \cdot d^{\log(n/2-d)} + 2\\
&= (n-d) \left(\frac{d-1}{d}\right)^{\log(n-d)} d^{\log(n-d)} + 2 (n/2-d) \cdot d^{\log(n/2-d)} + 2\\
&\leq (n-d)\left(\frac{d-1}{d}\right)^{\log(n-d)} d^{\log(n-d)} + (n-d) \cdot d^{\log((n-d)/2)} + 2 \\
&= (n-d)\left(\frac{d-1}{d}\right)^{\log(n-d)} d^{\log(n-d)} + \frac1d (n-d) \cdot d^{\log(n-d)} + 2\\
&\leq (n-d) \left(\frac{d-1}{d}\right)^{c} d^{\log(n-d)} + \frac1d (n-d) \cdot d^{\log(n-d)} + 2\\
&= (n-d) d^{\log(n-d)} \left[ \left(\frac{d-1}{d}\right)^{c}  + \frac1d   + \frac{2}{(n-d) d^{\log(n-d)}} \right]\\
&\leq (n-d) d^{\log(n-d)} \left[ \left(\frac{d-1}{d}\right)^{c}  + \frac1d  + \frac{2}{d \cdot d^{\log d}} \right].
\end{align*}
Since $(\frac{d-1}{d})^{c}  + \frac1d  + \frac{2}{d \cdot d^{\log d}} \leq 1$ for all $d \geq 3$
by Lemma~\ref{lemma:inequalities}, the previous expression is bounded above by
$(n-d)d^{\log(n-d)} = (n-d)(n-d)^{\log(d)} = (n-d)^{1+\log d}$.
\end{proof}
As a corollary, the diameter bound of $(n-d)^{1+\log d}$ holds for the connected layer families of Eisenbrand et al. in~\cite{Eisenbrand:Diameter-of-Polyhedra}. For the diameters of subset partition graphs satisfying dimension reduction, the gap between the upper bound of $(n-d)^{1+\log d}$ and the Eisenbrand asymptotic lower bound of $\Omega(\frac{n^2}{\log n})$ is rather large. It remains to be seen if H\"ahnle's conjectured upper bound of $d(n-1)$ for polytopes (see, e.g.,~\cite{Santos:RecentProgressCombinatorialDiameter}) holds for subset partition graphs with dimension reduction.

\end{document}